\newtheorem{theorem}{Theorem}[section]
\newtheorem{lemma}[theorem]{Lemma}
\newtheorem{corollary}[theorem]{Corollary}
\theoremstyle{definition}
\theoremstyle{remark}
\numberwithin{equation}{section}
\newcommand{\mmod}[1]{\,\,(\text{mod}\,\,#1)}
\def\bfi{{\mathbf i}}
\def\bfm{{\mathbf m}}
\def\calA{{\mathcal A}}
\def\calI{{\mathcal I}}
\def\calM{{\mathcal M}}
\def\dbN{{\mathbb N}}
\def\dbR{{\mathbb R}}
\def\dbZ{{\mathbb Z}}
\def\grA{{\mathfrak A}}
\def\grB{{\mathfrak B}}
\def\grC{{\mathfrak C}}
\def\grB{{\mathfrak B}}\def\grC{{\mathfrak C}}
\def\alp{{\alpha}} \def\bfalp{{\boldsymbol \alpha}}
\def\bet{{\beta}}  \def\bfbet{{\boldsymbol \beta}}
\def\gam{{\gamma}}  \def\bfgam{{\boldsymbol \gamma}}
\def\del{{\delta}}
\def\tet{{\theta}}  
\def\bfiota{{\boldsymbol \iota}}
\def\lam{{\lambda}}
\def\sig{{\sigma}}
\def\d{{\partial}}
\def\eps{\varepsilon}
\def\le{\leqslant} \def\ge{\geqslant}
\def\d{{\,{\rm d}}}
\begin{document}
\title[Perturbations of Weyl sums]{Perturbations of Weyl sums}
\author[Trevor D. Wooley]{Trevor D. Wooley}
\address{School of Mathematics, University of Bristol, University Walk, Clifton, Bristol BS8 
1TW, United 
Kingdom}
\email{matdw@bristol.ac.uk}
\subjclass[2010]{11L15, 11L07, 11P55}
\keywords{Exponential sums, Hardy-Littlewood method}
\date{}
\begin{abstract} Write $f_k(\bfalp;X)=\sum_{x\le X}e(\alp_1x+\ldots +\alp_kx^k)$ 
$(k\ge 3)$. We show that there is a set $\grB\subseteq [0,1)^{k-2}$ of full measure with 
the property that whenever $(\alp_2,\ldots ,\alp_{k-1})\in \grB$ and $X$ is sufficiently 
large, then
$$\sup_{(\alp_1,\alp_k)\in [0,1)^2}|f_k(\bfalp;X)|\le X^{1/2+4/(2k-1)}.$$
For $k\ge 5$, this improves on work of Flaminio and Forni, in which a Diophantine condition is 
imposed on $\alp_k$, and the exponent of $X$ is $1-2/(3k(k-1))$.
\end{abstract}
\maketitle

\section{Introduction} Consider the exponential sum $f_k(\bfalp;X)$, defined for $k\ge 2$ 
and $\bfalp\in \dbR^k$ by
\begin{equation}\label{1.1}
f_k(\bfalp;X)=\sum_{1\le x\le X}e(\alp_1x+\ldots +\alp_kx^k),
\end{equation}
where, as usual, we write $e(z)=e^{2\pi iz}$. It was shown by H. Weyl \cite{Wey1916} 
that when $\alp_k$ is irrational, then $\lim\sup\, X^{-1}|f_k(\bfalp;X)|=0$ as 
$X\rightarrow \infty$. Indeed, when $\alp_k$ satisfies an appropriate Diophantine 
condition, as is the case for algebraic irrational numbers such as $\sqrt{2}$, then for each 
$\eps>0$, provided only that $X$ is sufficiently large in terms of $k$ and $\eps$, one has 
the upper bound
\begin{equation}\label{1.2}
|f_k(\bfalp;X)|\le X^{1-2^{1-k}+\eps}.
\end{equation}
Although such conclusions can be improved by employing the latest developments 
surrounding Vinogradov's mean value theorem (see, for example 
\cite[Theorem 1.5]{Woo2012}), the improved exponents remain very close to $1$. 
Motivated by recent work of Flaminio and Forni \cite{FF2014} concerning equidistribution 
for higher step nilflows, in this paper we address two basic questions. First, we explore the 
extent to which the estimate (\ref{1.1}) can be improved if one is prepared to exclude the 
perturbing coefficient tuple $(\alp_1,\ldots ,\alp_{k-1})$ from a set of measure zero. 
Second, we examine how sensitive such estimates may be to the Diophantine conditions 
imposed on the lead coefficient $\alp_k$.\par

Before proceeding further, we introduce some notation associated with Vinogradov's mean 
value theorem. With $f_k(\bfalp;X)$ defined via (\ref{1.1}), the {\it Main Conjecture} 
asserts that for all positive numbers $s$, one has
\begin{equation}\label{1.3}
\int_{[0,1)^k}|f_k(\bfalp;X)|^{2s}\d\bfalp \ll X^\eps(X^s+X^{2s-\frac{1}{2}k(k+1)}).
\end{equation}
Here and throughout, the implicit constant in Vinogradov's notation may depend on $k$, 
$s$ and the arbitrary positive number $\eps$. We denote by ${\rm MC}_k(u)$ the 
assertion that the Main Conjecture (\ref{1.3}) holds for $1\le s\le u$. We will be interested 
in the size of the exponential sum $f_k(\bfalp;X)$ when the coefficients $\alp_i$ are fixed 
for certain suffices $i=i_l$ $(1\le l\le t)$ with $1\le i_1<i_2<\ldots <i_t\le k$. The 
complementary set of suffices
$$\{1,2,\ldots ,k\}\setminus \{i_1,i_2,\ldots ,i_t\}=\{ \iota_1,\iota_2,\ldots \iota_{k-t}\},
$$
with $1\le \iota_1<\iota_2<\ldots <\iota_{k-t}\le k$, then corresponds to a $(k-t)$-tuple 
$(\alp_{\iota_1},\ldots ,\alp_{\iota_{k-t}})$ that we permit to come from a set 
$\grB(\bfiota)\subseteq [0,1)^{k-t}$ that is central to our investigations. In order to 
facilitate concision, throughout this paper we write $\bfalp^*$ for 
$(\alp_{i_1},\ldots ,\alp_{i_t})$ and $\bfalp^\dagger$ for 
$(\alp_{\iota_1},\ldots ,\alp_{\iota_{k-t}})$.\par

\begin{theorem}\label{theorem1.1} Suppose that $k\ge 3$ and 
$1\le u\le \tfrac{1}{2}k(k+1)$, and assume ${\rm MC}_k(u)$. Let $t$ be a positive 
integer with $1\le t\le k$, and let $\bfi$ be a $t$-tuple of suffices satisfying 
$1\le i_1<i_2<\ldots <i_t\le k$. Then there exists a set 
$\grB(\bfiota)\subseteq [0,1)^{k-t}$ of full measure such that, whenever 
$(\alp_{\iota_1},\ldots ,\alp_{\iota_{k-t}})\in \grB(\bfiota)$, then for all real numbers $X$ 
sufficiently large in terms of $\eps$, $k$ and $\bfalp^\dagger$, one has
$$\sup_{\bfalp^*\in [0,1)^t}|f_k(\bfalp;X)|\le X^{1/2+\del(\bfi)+\eps},$$
where
\begin{equation}\label{1.4}
\del(\bfi)=\frac{t+1+2(i_1+\ldots +i_t)}{4u+2t+2}.
\end{equation}
\end{theorem}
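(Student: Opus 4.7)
The plan is to bound a mean of $\sup_{\bfalp^*}|f_k(\bfalp;X)|^{2s}$ over $\bfalp^\dagger$ using ${\rm MC}_k(u)$, and then to apply Markov's inequality together with the Borel--Cantelli lemma. For $X$ large I would first discretize $\bfalp^*$ on a grid $G_X\subseteq[0,1)^t$ whose spacing in the $\alp_{i_j}$-direction is $M_{i_j}^{-1}$ with $M_{i_j}=\lceil X^{a_j}\rceil$, the $a_j$ being parameters to be optimized. The derivative estimate $|\partial f_k/\partial\alp_i|\le 2\pi X^{i+1}$ makes the per-direction discretization error $O(X^{i_j+1-a_j})$, so taking $a_j\ge i_j+\tfrac12$ keeps it under $X^{1/2}$. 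Consequently,
$$\sup_{\bfalp^*\in[0,1)^t}|f_k(\bfalp;X)|^{2s}\ll \sum_{\bfalp^*\in G_X}|f_k(\bfalp;X)|^{2s}+O(X^s).$$

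Next, I would evaluate $\int_{[0,1)^{k-t}}\sum_{\bfalp^*\in G_X}|f_k|^{2s}\,\d\bfalp^\dagger$ by expanding $|f_k|^{2s}$ as a $2s$-fold sum over $\bfx=(x_1,\ldots,x_{2s})\in[1,X]^{2s}$ and invoking orthogonality of additive characters modulo $M_{i_j}$ in each coordinate $\alp_{i_j}$. This produces a factor $\prod_j M_{i_j}$ multiplied by the indicator that $M_{i_j}\mid A_{i_j}(\bfx)$, where $A_l(\bfx):=\sum_{m=1}^s(x_m^l-x_{s+m}^l)$. When $a_j>i_j$, the inequality $|A_{i_j}(\bfx)|\le 2sX^{i_j}<M_{i_j}$ forces $A_{i_j}(\bfx)=0$; integrating over $\bfalp^\dagger$ further imposes $A_{\iota_l}(\bfx)=0$ for $l=1,\ldots,k-t$, collapsing the expression to $\bigl(\prod_j M_{i_j}\bigr)J_{s,k}(X)$. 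Taking $s=u$ and applying ${\rm MC}_k(u)$ yields
$$\int_{[0,1)^{k-t}}\sup_{\bfalp^*\in[0,1)^t}|f_k(\bfalp;X)|^{2u}\,\d\bfalp^\dagger\ll X^{\sum_j a_j+u+\eps}.$$

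Finally, Markov's inequality together with Borel--Cantelli, applied along a sequence of scales dense enough that the inter-scale increment of $f_k$ is dominated by $X^{1/2+\del(\bfi)+\eps}$, produces a full-measure set $\grB(\bfiota)\subseteq[0,1)^{k-t}$ on which the desired supremum bound holds for all sufficiently large $X$. The principal obstacle is the joint optimization of the moment $s$, the grid exponents $a_j$, and the density of the scale sequence: balancing these correctly must produce the exact numerator $t+1+2(i_1+\cdots+i_t)$ and denominator $4u+2t+2$ in $\del(\bfi)$. A naive dyadic choice $X=2^j$ with $s=u$ and $a_j=i_j+1$ gives only the cruder $\del=(t+i_1+\cdots+i_t)/(2u)$; the sharper stated exponent appears to demand either passing to a higher moment such as $s=2u+t+1$ (so that $2s=4u+2t+2$) combined with the trivial interpolation $J_{s,k}(X)\le X^{2s-u+\eps}$ extracted from ${\rm MC}_k(u)$, or a denser sampling sequence whose Borel--Cantelli summability cost contributes additively to the numerator.
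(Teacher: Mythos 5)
Your overall architecture coincides with the paper's: discretize $\bfalp^*$, bound a $2u$-th moment over $\bfalp^\dagger$ via ${\rm MC}_k(u)$, apply Chebyshev/Markov, run Borel--Cantelli along a sequence of scales $X_n$ with gaps of size $T(X_n)$, and use $|f_k(\bfalp;X)-f_k(\bfalp;X_n)|\le X-X_n$ to pass to all large $X$. The orthogonality computation reducing the grid-summed moment to $\bigl(\prod_j M_{i_j}\bigr)J_{u,k}(X)$ is correct and is an exact counting analogue of the paper's integration over small hypercuboids. However, there is a genuine gap, which you yourself flag: with your parameter choices the argument proves only $\del=(t+1+2\sig(\bfi))/(4u+2)$ (or worse), not the stated $\del(\bfi)=(t+1+2\sig(\bfi))/(4u+2t+2)$.

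The missing idea is that the grid spacing must be coupled to the threshold $T$, not to $X^{1/2}$. You choose $a_j\ge i_j+\tfrac12$ so that the discretization error is $O(X^{1/2})$; but since one is only trying to detect the event $|f_k(\bfalp;X)|>T$ with $T=X^{1/2+\del(\bfi)+\tau}\gg X^{1/2}$, it suffices that perturbing $\alp_{i_j}$ changes $f_k$ by at most $T/(4k)$, which permits the coarser spacing $\asymp TX^{-i_j-1}$, i.e.\ $M_{i_j}\asymp X^{i_j+1}/T$. This is exactly the paper's Lemma 2.1/2.2: the number of cells is then $\prod_j M_{i_j}\asymp X^{t+\sig(\bfi)}T^{-t}$ rather than your $X^{\sig(\bfi)+t/2}$, yielding $\mu(\grB_T(X))\ll X^{u+t+\sig(\bfi)+\eps}T^{-2u-t}$; the extra factor $T^{-t}$ is precisely the source of the $+2t$ in the denominator of $\del(\bfi)$. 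The remaining $+1$ in the numerator then comes from the scale sequence: with gaps $\asymp T(X_n)$ there are about $X/T(X)$ scales below $X$, which multiplies the Borel--Cantelli sum by $T_n/X_n$ and forces $\theta(2u+t+1)=u+t+\sig(\bfi)+1$ for $T=X^\theta$. Neither of your two proposed repairs achieves this: raising the moment to $s>u$ and interpolating trivially via $J_{s,k}\le X^{2(s-u)}J_{u,k}$ costs a factor $X^{2(s-u)}$ against a Markov gain of only $T^{2(s-u)}<X^{2(s-u)}$, so it strictly loses for $\theta<1$; and a \emph{denser} sampling sequence only enlarges the Borel--Cantelli sum (the sequence should be as sparse as the trivial bound $|f_k(\bfalp;X)-f_k(\bfalp;X_n)|\le X-X_n$ allows, namely gaps of order $T_n$).
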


\begin{corollary}\label{corollary1.2} Suppose that $k\ge 3$. Then there exists a set 
$\grB\subseteq [0,1)^{k-2}$ of full measure such that, whenever 
$(\alp_2,\alp_3,\ldots ,\alp_{k-1})\in \grB$, then for all real numbers $X$ sufficiently large 
in terms of $k$ and $\alp_2,\ldots ,\alp_{k-1}$, one has
\begin{equation}\label{1.5}
\sup_{(\alp_1,\alp_k)\in [0,1)^2}|f_k(\bfalp;X)|\le X^{1/2+\del_k},
\end{equation}
where $\del_k=4/(2k-1)$. Moreover, when $k$ is sufficiently large, the same conclusion 
holds with $\del_k=1/k+o(1)$.
\end{corollary}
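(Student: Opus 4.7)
The plan is to apply Theorem~\ref{theorem1.1} with $t=2$ and the index pattern $\bfi=(1,k)$, so that $\bfalp^*=(\alp_1,\alp_k)$ and $\bfalp^\dagger=(\alp_2,\ldots,\alp_{k-1})$, and to take $\grB := \grB(\bfiota)$ for $\bfiota=(2,3,\ldots,k-1)$. Under this specialization, the exponent formula (\ref{1.4}) becomes
$$\del(\bfi) = \frac{t+1+2(i_1+i_2)}{4u+2t+2} = \frac{2k+5}{4u+6},$$
and the rest of the proof reduces to choosing $u$ judiciously (subject to ${\rm MC}_k(u)$) and verifying an elementary inequality.

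For the first conclusion, I would invoke Theorem~\ref{theorem1.1} at $u = \tfrac{1}{2}k(k+1)$, the largest value for which ${\rm MC}_k(u)$ is meaningfully stated, and which is now unconditionally available in view of the resolution of the Main Conjecture for Vinogradov's mean value theorem. This yields $\del(\bfi)=(2k+5)/(2k^2+2k+6)$. A short algebraic comparison,
$$(2k+5)(2k-1) = 4k^2+8k-5 \;<\; 8k^2+8k+24 = 4(2k^2+2k+6),$$
shows that $\del(\bfi) < 4/(2k-1)$ strictly, with a gap bounded below by a positive quantity depending only on $k$. Choosing $\eps$ smaller than this gap then gives $X^{1/2+\del(\bfi)+\eps}\le X^{1/2+4/(2k-1)}$ for $X\ge 1$, establishing (\ref{1.5}) with $\del_k = 4/(2k-1)$.

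For the asymptotic refinement, I retain the choice $u = \tfrac{1}{2}k(k+1)$ and expand
$$\frac{2k+5}{2k^2+2k+6} = \frac{1}{k} + O\!\left(\frac{1}{k^2}\right)$$
as $k\to\infty$. Hence $\del(\bfi) = 1/k + o(1)$; since $\eps>0$ is arbitrary in Theorem~\ref{theorem1.1}, it may be chosen (in terms of $k$) so that $\del(\bfi)+\eps$ also has the form $1/k+o(1)$, yielding the claimed refinement.

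I do not anticipate a serious technical obstacle, as the corollary is essentially a direct specialization of Theorem~\ref{theorem1.1} combined with routine algebra. The only substantive point is the availability of ${\rm MC}_k\!\left(\tfrac{1}{2}k(k+1)\right)$, i.e., the full Main Conjecture for Vinogradov's mean value theorem at the top of its range of $s$; for smaller values of $u$ that are also unconditionally known, the same argument delivers a weaker but qualitatively similar bound.
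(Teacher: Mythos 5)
Your argument is correct and follows the same route as the paper: specialize Theorem~\ref{theorem1.1} to $t=2$ and $\bfi=(1,k)$, then feed in a known instance of ${\rm MC}_k(u)$. The only point of divergence is the choice of $u$. You take $u=\tfrac12 k(k+1)$, invoking the full Main Conjecture for Vinogradov's mean value theorem; that is indeed a theorem now (for $k=3$ by the cubic case cited in the paper, and for $k\ge 4$ by Bourgain--Demeter--Guth, or by Wooley's nested efficient congruencing), but it was not available when this paper was written. The paper therefore works with $u=\bigl[\tfrac14(k+1)^2\bigr]$ from Ford--Wooley for $k\ge 4$, yielding $\del(\bfi)=\tfrac{2k+5}{k^2+2k+6}<\tfrac{4}{2k-1}$, and a separate choice $u=\bigl[\tfrac12 k(k+1)-\tfrac13 k-8k^{2/3}\bigr]$ from the approximate Main Conjecture for the asymptotic refinement $\del_k=1/k+o(1)$. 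Your single choice of $u$ dispatches both conclusions at once, and your algebra --- the cross-multiplication $(2k+5)(2k-1)<4(2k^2+2k+6)$ and the expansion $\tfrac{2k+5}{2k^2+2k+6}=\tfrac1k+O(k^{-2})$ --- is correct. Just flag, if you write this up, that the unconditional availability of ${\rm MC}_k\bigl(\tfrac12 k(k+1)\bigr)$ for $k\ge 4$ postdates the paper.
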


When $k$ is large, the conclusion of Corollary \ref{corollary1.2} obtains very nearly 
square-root cancellation for the exponential sum $f_k(\bfalp;X)$, greatly improving the 
estimate (\ref{1.2}). In addition to this emphatic response to the first question posed in 
our opening paragraph, we note that no condition whatsoever has been imposed on the 
lead coefficient $\alp_k$. Of course, the restriction of the $(k-2)$-tuple 
$(\alp_2,\ldots ,\alp_{k-1})$ to the universal set $\grB$ of measure $1$ implicitly imposes 
some sort of Diophantine condition on these lower order coefficients. Nonetheless, it is 
clear that there is in general little sensitivity to the lead coefficient.\par

Flaminio and Forni \cite[Corollary 1.2]{FF2014} have derived a conclusion similar to that 
of Corollary \ref{corollary1.2} in which $\alp_k$ is subject to a certain Diophantine 
condition, and the conclusion (\ref{1.5}) holds with 
$\tfrac{1}{2}+\del_k=1-1/\left(\frac{3}{2}k(k-1)\right)$. Subject to a similar Diophantine 
condition on $\alp_k$, the latest progress on Vinogradov's mean value theorem permits 
the proof of a similar estimate with $2(k-1)(k-2)$ in place of $\frac{3}{2}k(k-1)$, though 
without any restriction on $(\alp_2,\ldots ,\alp_{k-1})$ (simply substitute the conclusion of 
\cite[Theorem 1.2]{Woo2014c} into the argument of the proof of 
\cite[Theorem 11.1]{Woo2014a}). Thus, when $k$ is large, the conclusion of Flaminio and 
Forni obtains barely non-trivial cancellation subject to a Diophantine condition, whereas 
Corollary \ref{corollary1.2} delivers nearly square-root cancellation.\par

We have aligned Corollary \ref{corollary1.2} so as to facilitate comparison with the work of 
Flaminio and Forni \cite[Corollary 1.2]{FF2014}. When $k$ is large, the conclusion of 
Theorem \ref{theorem1.1} offers estimates for $f_k(\bfalp;X)$ exhibiting close to 
square-root cancellation even when the number of fixed coefficients $\alp_i$ is large. We 
illustrate such ideas with a further corollary.

\begin{corollary}\label{corollary1.3} Suppose that $k$ is large, and that $i_l$ 
$(1\le l\le t)$ are integers with $1\le i_1<i_2<\ldots <i_t\le k$. Suppose also that 
$i_1+\ldots +i_t+t+1<\tfrac{1}{2}k^2/(\log k)$. Then there exists a set 
$\grB(\bfiota)\subseteq [0,1)^{k-t}$ of full measure such that, whenever 
$\bfalp^\dagger \in \grB(\bfiota)$, then for all $X$ sufficiently large in terms of $k$ and 
$\bfalp^\dagger$, one has
$$\sup_{\bfalp^*\in [0,1)^t}|f_k(\bfalp;X)|\le X^{1/2+1/\log k}.$$
\end{corollary}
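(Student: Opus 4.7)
The plan is to deduce Corollary~\ref{corollary1.3} directly from Theorem~\ref{theorem1.1} by choosing $u$ as large as the Main Conjecture allows. For $k\ge 3$, the now-resolved Main Conjecture for Vinogradov's mean value theorem gives ${\rm MC}_k(u)$ all the way up to $u=\tfrac{1}{2}k(k+1)$, so I intend to apply Theorem~\ref{theorem1.1} with this optimal choice $u=\tfrac{1}{2}k(k+1)$, yielding $4u+2t+2=2k(k+1)+2t+2$.

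With this $u$, the formula \eqref{1.4} becomes
$$\del(\bfi)=\frac{t+1+2(i_1+\ldots+i_t)}{2k(k+1)+2t+2}.$$
The elementary inequality $t+1+2(i_1+\ldots+i_t)\le 2(i_1+\ldots+i_t+t+1)$ combined with the hypothesis $i_1+\ldots+i_t+t+1<\tfrac{1}{2}k^2/(\log k)$ bounds the numerator by $k^2/\log k$. Since $2k(k+1)+2t+2\ge 2k^2$, I obtain
$$\del(\bfi)<\frac{k^2/\log k}{2k^2}=\frac{1}{2\log k}.$$

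Thus, for $k$ sufficiently large, one may choose $\eps=1/(2\log k)$ so that $\del(\bfi)+\eps<1/\log k$, and the conclusion of Theorem~\ref{theorem1.1} then yields exactly the bound claimed in Corollary~\ref{corollary1.3}, after taking the supremum over $\bfalp^*\in[0,1)^t$ with $\bfalp^\dagger$ in the full-measure set $\grB(\bfiota)$ produced by the theorem. No step here is a genuine obstacle: the entire argument is a matching of parameters, and the only substantive input beyond Theorem~\ref{theorem1.1} is the unconditional availability of ${\rm MC}_k(\tfrac{1}{2}k(k+1))$ for all $k\ge 3$ (a remark worth including so that the hypothesis ``assume ${\rm MC}_k(u)$'' of Theorem~\ref{theorem1.1} is seen to be automatically satisfied in the present setting).
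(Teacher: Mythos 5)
Your argument is correct and follows essentially the same route as the paper's: apply Theorem \ref{theorem1.1} with an unconditional value of $u$, bound the numerator of $\del(\bfi)$ using the hypothesis $i_1+\ldots+i_t+t+1<\tfrac{1}{2}k^2/\log k$, and absorb $\eps$ into the slack. The only real difference is the input you feed in: you invoke the full Main Conjecture ${\rm MC}_k(\tfrac{1}{2}k(k+1))$, which is a theorem today but is not among the results this paper cites or relies on; the paper instead takes $u=\left[\tfrac{1}{4}(k+1)^2\right]$ from Ford--Wooley, for which the denominator $4u+2t+2\ge k^2+2k+2t+2$ already exceeds the numerator bound $k^2/\log k-t-1$ by a factor of order $\log k$, so the weaker unconditional input suffices and yields $\del(\bfi)<1/\log k$ with enough room (of order $1/(k\log k)$) for $\eps$. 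Your larger choice of $u$ simply doubles the denominator and buys the more comfortable margin $\del(\bfi)<1/(2\log k)$; the parameter check is otherwise identical and sound.
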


The conclusion of Corollary \ref{corollary1.3} shows that, in a suitable sense, almost a 
positive proportion of the coefficients of $f_k(\bfalp;X)$ can be fixed, and yet one 
nonetheless achieves nearly square-root cancellation on a universal set of full measure for 
the remaining coefficients.\par

Our methods extend naturally to deliver equidistribution results for polynomials modulo 
$1$. In this context, when $0\le a<b\le 1$, we write $Z_{a,b}(\bfalp;N)$ for the number 
of integers $n$ with $1\le n\le N$ for which
$$a\le \alp_1n+\alp_2n^2+\ldots +\alp_kn^k\le b\mmod{1}.$$

\begin{theorem}\label{theorem1.4} Suppose that $k\ge 3$ and 
$1\le u\le \tfrac{1}{2}k(k+1)$, and assume ${\rm MC}_k(u)$. Let $t$ be a positive 
integer with $1\le t\le k$, and let $\bfi$ be a $t$-tuple of suffices satisfying 
$1\le i_1<i_2<\ldots <i_t\le k$. Then there exists a set 
$\grB(\bfiota)\subseteq [0,1)^{k-t}$ of full measure such that, whenever 
$(\alp_{\iota_1},\ldots ,\alp_{\iota_{k-t}})\in \grB(\bfiota)$, then for all real numbers $N$ 
sufficiently large in terms of $\eps$, $k$ and $\bfalp^\dagger$, one has
$$|Z_{a,b}(\bfalp;N)-(b-a)N|\le N^{1/2+\nu(\bfi)+\eps}\quad (0\le a<b\le 1),$$
where
\begin{equation}\label{1.6}
\nu(\bfi)=\frac{t+2+2(i_1+\ldots +i_t)}{4u+2t+4}.
\end{equation}
\end{theorem}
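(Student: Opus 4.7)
The plan is to deduce Theorem \ref{theorem1.4} from Theorem \ref{theorem1.1} via the Erd\H{o}s--Tur\'an discrepancy inequality. For any integer $H\ge 1$, this inequality gives
\begin{equation*}
|Z_{a,b}(\bfalp;N)-(b-a)N|\ll \frac{N}{H}+\sum_{h=1}^H\frac{1}{h}|f_k(h\bfalp;N)|,
\end{equation*}
uniformly in $0\le a<b\le 1$, thereby reducing matters to a uniform upper bound on $|f_k(h\bfalp;N)|$ for $1\le h\le H$.

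For each positive integer $h$ the dilation $\bfalp^\dagger\mapsto h\bfalp^\dagger\pmod 1$ is measure-preserving on $[0,1)^{k-t}$, so the preimage of the full-measure set $\grB_0(\bfiota)$ furnished by Theorem \ref{theorem1.1} is itself of full measure. Taking the countable intersection of these preimages as $h$ ranges over $\dbN$ produces a full-measure set $\grB(\bfiota)$ on which Theorem \ref{theorem1.1}, applied with $h\bfalp$ in place of $\bfalp$, yields
\begin{equation*}
\sup_{\bfalp^*\in[0,1)^t}|f_k(h\bfalp;N)|\le N^{1/2+\del(\bfi)+\eps}
\end{equation*}
for every positive integer $h$ and every $N$ sufficiently large in terms of $h$, $\bfalp^\dagger$ and $\eps$.

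The hard part will be controlling how the ``sufficiently large $N$'' threshold depends on $h$, since the displayed bound is needed uniformly for $h$ up to a power of $N$. The remedy is to extract from the proof of Theorem \ref{theorem1.1} an explicit quantitative dependence on the coefficients and then invoke a Borel--Cantelli argument with summable weights (say $h^{-2}$) to produce genuine uniformity for $h\le H$, at the cost of a mild deterioration in the exponent. A direct comparison of (\ref{1.4}) and (\ref{1.6}) shows that $\nu(\bfi)$ is precisely the value that $\del(\bfi')$ would take for the extended tuple $\bfi'=(0,i_1,\ldots ,i_t)$ of length $t+1$; this matches the intuition that the discrete parameter $h$ plays the role of an additional degree-zero coordinate in the Vinogradov-type averaging underlying Theorem \ref{theorem1.1}.

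Finally, choosing $H\approx N^{1/2-\nu(\bfi)-\eps}$ to balance the two contributions in the Erd\H{o}s--Tur\'an inequality, and absorbing the resulting $\log H$ factor into $N^\eps$, delivers the desired bound $|Z_{a,b}(\bfalp;N)-(b-a)N|\le N^{1/2+\nu(\bfi)+\eps}$, uniformly in $0\le a<b\le 1$.
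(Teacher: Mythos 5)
Your outline shares its skeleton with the paper's argument: reduce via Erd\H{o}s--Tur\'an to bounding $|f_k(h\bfalp;N)|$ for $1\le h\le H$, exploit that the dilation $\bfalp^\dagger\mapsto h\bfalp^\dagger\!\pmod 1$ preserves Lebesgue measure on $[0,1)^{k-t}$, and choose $H\approx N^{1/2-\nu(\bfi)-\eps}$ at the end. Your observation that $\nu(\bfi)$ is exactly $\del(\bfi')$ for the augmented tuple $\bfi'=(0,i_1,\ldots,i_t)$ is also correct and is a genuinely illuminating way to predict the exponent, since the $h$-average behaves precisely like an added coordinate of degree zero once one uses $H\asymp X/T$.

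The gap is in the step you flag as ``the hard part.'' Taking the countable intersection of the per-$h$ full-measure sets from Theorem \ref{theorem1.1} tells you that for almost every $\bfalp^\dagger$ and each fixed $h$ the bound holds once $X\ge X_0(h,\bfalp^\dagger)$; this is useless for Erd\H{o}s--Tur\'an, which needs the bound for all $h$ up to a positive power of $X$ with a single threshold in $X$. Your proposed remedy --- a Borel--Cantelli argument ``with summable weights (say $h^{-2}$)'' --- does not repair this: multiplying the $n$th summand by $\sum_h h^{-2}$ still only yields, for a.e.\ $\bfalp^\dagger$, finitely many bad pairs $(n,h)$, and the implicit $h$-dependence of $n$ persists. (If instead one inflates each threshold to $T_{n,h}\asymp T_n h^{c}$ so that the $h$-sum becomes genuinely summable, the resulting Erd\H{o}s--Tur\'an estimate degrades to $\approx HT_n$, not $T_n\log H$.) The correct mechanism, and the one the paper uses, is to bound
\begin{equation*}
\mu\Bigl(\bigcup_{1\le h\le H_n}\grB_{T_n}^{(h)}(X_n)\Bigr)\le H_n\,\mu(\grB_{T_n}(X_n))
\end{equation*}
\emph{without} any decaying weights, pay for the extra factor $H_n$ by raising the exponent from $\del(\bfi)$ to $\nu(\bfi)$, and then apply Borel--Cantelli once to the union sets $\grC_{T_n}(X_n,H_n)$ along a sequence $(X_n)$ satisfying \eqref{2.4}. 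This produces a single full-measure set $\grC_0$ on which the bound $|f_k(h\bfalp;X_n)|\le T_n$ holds simultaneously for all $h\le H_n$ for all large $n$, which is exactly the uniformity Erd\H{o}s--Tur\'an requires. You arrived at the right exponent and the right choice of $H$, but the union-over-$h$ measure estimate and the single Borel--Cantelli pass on those unions constitute the core of the proof and need to replace the weighted-summability idea.
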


\begin{corollary}\label{corollary1.5} Suppose that $k\ge 3$. Then there exists a set 
$\grB\subseteq [0,1)^{k-1}$ of full measure such that, whenever 
$(\alp_1,\alp_2,\ldots ,\alp_{k-1})\in \grB$, then for all real numbers $N$ sufficiently large 
in terms of $k$ and $\alp_1,\ldots ,\alp_{k-1}$, one has
$$|Z_{a,b}(\bfalp;N)-(b-a)N|\le N^{1/2+2/k}\quad (0\le a<b\le 1).$$
\end{corollary}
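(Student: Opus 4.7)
The plan is to deduce Corollary \ref{corollary1.5} as an immediate specialization of Theorem \ref{theorem1.4}. I would take $t = 1$ and $i_1 = k$, so that the sole fixed coefficient is $\alp_k$ and the complementary tuple $\bfalp^\dagger = (\alp_1,\ldots,\alp_{k-1})$ is drawn from a full-measure subset $\grB = \grB(\bfiota) \subseteq [0,1)^{k-1}$, precisely matching the setup of the corollary.

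To extract the strongest possible bound from Theorem \ref{theorem1.4}, I would invoke the Main Conjecture ${\rm MC}_k(u)$ at the endpoint $u = \tfrac{1}{2}k(k+1)$, which is now unconditional by the full resolution of Vinogradov's main conjecture. Substituting these parameters into formula (\ref{1.6}) gives
$$\nu(\bfi) = \frac{1 + 2 + 2k}{4\cdot\tfrac{1}{2}k(k+1) + 2 + 4} = \frac{2k+3}{2k^2 + 2k + 6}.$$

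It then remains to compare $\nu(\bfi)$ with $2/k$. Clearing denominators, the strict inequality $\nu(\bfi) < 2/k$ reduces to $k(2k+3) < 2(2k^2+2k+6)$, i.e., $0 < 2k^2 + k + 12$, which holds trivially for every $k \ge 1$. Hence $\eps > 0$ may be chosen small enough (in terms of $k$ alone) to ensure $\nu(\bfi) + \eps \le 2/k$, and Theorem \ref{theorem1.4} then delivers the claimed estimate for all $N$ sufficiently large in terms of $k$ and $\bfalp^\dagger$.

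I anticipate no substantive obstacle here. The only point worth verifying is that the sufficiency threshold on $N$ in Theorem \ref{theorem1.4} depends solely on $\bfalp^\dagger$ and not on the fixed coefficient $\bfalp^* = (\alp_k)$, so that the resulting bound is uniform in $\alp_k$ --- which is implicit in the phrasing of Theorem \ref{theorem1.4}, and is exactly what Corollary \ref{corollary1.5} requires, since no restriction is placed on $\alp_k$ there.
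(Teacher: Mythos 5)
Your proposal is correct and follows essentially the same route as the paper: specialize Theorem \ref{theorem1.4} to $\bfi=(k)$ (so $t=1$, $i_1=k$, $\bfalp^\dagger=(\alp_1,\ldots,\alp_{k-1})$) and verify $\nu(\bfi)<2/k$ strictly, so that $\eps$ can be absorbed. The one difference is the input value of $u$: the paper, predating the full resolution of the Main Conjecture, takes the then-unconditional $u=\left[\tfrac{1}{4}(k+1)^2\right]$ from \cite[Theorem 1.1]{FW2014} (with \cite{Woo2014c} covering $k=3$), which already gives $\nu(\bfi)\le (2k+3)/(k^2+2k+6)<2/k$; your endpoint choice $u=\tfrac{1}{2}k(k+1)$ merely widens the margin and changes nothing in the argument.
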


Write $\|\tet\|=\min\{|\tet-m|:m\in \dbZ\}$. Then by putting $a=0$ and 
$b=N^{-1/2+2/k}$, we obtain as a special case of Corollary \ref{corollary1.5} the 
following conclusion.

\begin{corollary}\label{corollary1.6} Suppose that $k\ge 3$. Then there exists a set 
$\grB^*\subseteq [0,1)^{k-1}$ of full measure such that, whenever 
$(\alp_1,\ldots ,\alp_{k-1})\in \grB^*$, then for all real numbers $N$ sufficiently large in 
terms of $k$ and $\alp_1,\ldots ,\alp_{k-1}$, one has
\begin{equation}\label{1.7}
\min_{1\le n\le N}\|\alp_1n+\ldots +\alp_kn^k\| \ll N^{-1/2+2/k}.
\end{equation}
\end{corollary}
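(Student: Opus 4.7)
The plan is to deduce this corollary from Corollary \ref{corollary1.5} by specializing the interval $[a,b]$ in the latter to one of length comparable to $N^{-1/2+2/k}$, thereby forcing at least one integer $n\in[1,N]$ into a small neighbourhood of $\dbZ$. I would take $\grB^*=\grB$, the full-measure subset of $[0,1)^{k-1}$ furnished by Corollary \ref{corollary1.5}. Note that the estimate there is uniform in $\alp_k=\bfalp^*$, since $\alp_k$ enters only through the supremum in the companion Theorem \ref{theorem1.4}, so no Diophantine hypothesis on the leading coefficient is required for the forthcoming deduction.

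Assume first that $k\ge 5$, so that $-\tfrac{1}{2}+2/k<0$. Fix $(\alp_1,\ldots,\alp_{k-1})\in\grB^*$ together with an arbitrary $\alp_k\in\dbR$, and apply Corollary \ref{corollary1.5} with $a=0$ and $b=2N^{-1/2+2/k}$. For $N$ sufficiently large in terms of $k$ and $\alp_1,\ldots,\alp_{k-1}$ one has $b<\tfrac{1}{2}$, and Corollary \ref{corollary1.5} supplies
$$Z_{0,b}(\bfalp;N)\ge bN-N^{1/2+2/k}=N^{1/2+2/k}\ge 1.$$
Consequently there is an integer $n$ with $1\le n\le N$ whose fractional part $\{\alp_1 n+\ldots+\alp_k n^k\}$ lies in $[0,b]$. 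Since $b<\tfrac{1}{2}$, this immediately implies $\|\alp_1 n+\ldots+\alp_k n^k\|\le b\ll N^{-1/2+2/k}$, which is the desired conclusion. The cases $k=3$ and $k=4$ are trivial: here $-\tfrac{1}{2}+2/k\ge 0$, so $N^{-1/2+2/k}\ge 1$ and the bound $\|\cdot\|\le\tfrac{1}{2}$ is automatic.

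I do not anticipate any serious obstacle in this deduction, since all the genuine analytic content has already been expended in passing from Theorem \ref{theorem1.4} to Corollary \ref{corollary1.5}; what remains is merely a pigeonhole observation. The one point requiring attention is the choice of the constant $2$ in the definition of $b$: with $b=N^{-1/2+2/k}$ itself, the error term in Corollary \ref{corollary1.5} only delivers $Z_{0,b}(\bfalp;N)\ge 0$, which is useless for locating $n$. Taking any constant strictly greater than $1$ repairs this defect and is absorbed into the implicit constant in the $\ll$ appearing in (\ref{1.7}).
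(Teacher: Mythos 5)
Your proposal is correct and follows essentially the paper's route: the paper simply remarks before Corollary~\ref{corollary1.6} that it follows from Corollary~\ref{corollary1.5} ``by putting $a=0$ and $b=N^{-1/2+2/k}$.'' You go slightly beyond this: you observe, correctly, that taking $b=N^{-1/2+2/k}$ literally only yields $Z_{0,b}(\bfalp;N)\ge bN-N^{1/2+2/k}=0$, which does not guarantee a witness $n$; choosing instead $b=CN^{-1/2+2/k}$ with any fixed $C>1$ (you use $C=2$) produces $Z_{0,b}\ge (C-1)N^{1/2+2/k}\ge 1$, and the constant is absorbed into the $\ll$ of (\ref{1.7}). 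This is a sensible tightening of the paper's one-line derivation. (Alternatively, one could note that in the proof of Corollary~\ref{corollary1.5} the actual exponent $\nu(\bfi)+\eps$ is strictly less than $2/k$, so the error term is $o(N^{1/2+2/k})$; but your constant-factor fix is cleaner and works directly from the stated form of Corollary~\ref{corollary1.5}.) Your separate treatment of $k=3,4$, where $N^{-1/2+2/k}\ge 1$ and the conclusion is trivial, and your remark that the estimate in Corollary~\ref{corollary1.5} is uniform in the leading coefficient $\alp_k$, are both correct and appropriate.
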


There are results available in the literature analogous to (\ref{1.7}) in which 
$(\alp_1,\ldots ,\alp_{k-1})$ is a fixed real $(k-1)$-tuple. Thus one finds that the 
conclusions of \cite[Theorem 5.2]{Bak1986} and \cite[Theorem 11.3]{Woo2014a} (as 
enhanced by utilising \cite[Theorem 1.2]{Woo2014c}) yield an estimate of the shape 
(\ref{1.7}) with the exponent $\tfrac{1}{2}+\tfrac{2}{k}$ replaced by any real number 
exceeding $1-1/\min\{4(k-1)(k-2),2^{k-1}\}$. These uniform results are considerably 
weaker than those available via Corollary \ref{corollary1.6}.\par

In contrast to the ergodic methods employed by Flaminio and Forni \cite{FF2014}, in this 
paper we utilise recent progress on Vinogradov's mean value theorem. Of critical 
importance to us are mean value estimates of the shape
\begin{equation}\label{1.8}
\int_{[0,1)^k}|f_k(\bfalp;X)|^{2s}\d\bfalp \ll X^{s+\del},
\end{equation}
with $\del$ small and $s$ large. Prior to the author's introduction of ``efficient 
congruencing'' methods in 2012 (see \cite{Woo2012}), available estimates were far too 
weak to deliver conclusions of the type described in Corollary \ref{corollary1.2}. However, 
the estimate (\ref{1.8}) is established in \cite[Corollary 1.3]{Woo2013} with 
$\del=1+\eps$ for $1\le s\le \tfrac{1}{4}k^2+k$, and this would suffice for our purposes 
in the present paper. Recent work of Ford \cite[Theorem 1.1]{FW2014} joint with the 
author establishes (\ref{1.8}) for any $\del>0$ in the same range of $s$, and even more 
recently the author \cite[Theorem 1.3]{Woo2014b} has extended the permissible range of 
$s$ to $1\le s\le \tfrac{1}{2}k(k+1)-\tfrac{1}{3}k+o(k)$, encompassing nearly the whole 
of the critical interval.\par

Let $X$ and $T$ be large, and consider a fixed $t$-tuple 
$(\alp_{i_1},\ldots ,\alp_{i_t})\in [0,1)^t$. The estimate (\ref{1.8}) permits one to 
estimate the measure of the set $\grB_T(X)$ of $(k-t)$-tuples $(\alp_{\iota_1},\ldots 
\alp_{\iota_{k-t}})\in [0,1)^{k-t}$ for which $|f_k(\bfalp;X)|>T$. Suppose that $T$ is 
chosen as a function of $X$ for which $\sum_{X=1}^\infty\text{mes}(\grB_T(X))<\infty$, 
and define $\grB^*\subseteq [0,1)^{k-t}$ to be the set of $(k-t)$-tuples 
$(\alp_{\iota_1},\ldots ,\alp_{\iota_{k-t}})$ for which 
$\limsup T^{-1}|f_k(\bfalp;X)|\ge 1$ as $X\rightarrow \infty$. Then it follows from the 
Borel-Cantelli theorem that the set $\grB^*$ has measure $0$. One may remove the 
dependence of these estimates on the fixed $t$-tuple of coefficients 
$(\alp_{i_1},\ldots ,\alp_{i_t})$ by a suitable application of the mean value theorem, 
showing that the size of $|f_k(\bfalp;X)|$ changes little as $\alp_j$ varies over an interval 
having length of order $X^{-j}$. Moreover, we are able to sharpen our estimates by 
observing that $|f_k(\bfalp;X)|$ also changes little as $X$ varies over an interval of length 
small compared to $T$.\par

We remark that Pustyl$^\prime$nikov has work spanning a number of papers (see, 
for example \cite{Pus1991}) which derives conclusions related to those of this paper. 
Pustyl$^\prime$nikov makes use of the estimate (\ref{1.8}) in the classical case 
$s=k$. In this special case, one may apply Newton's formulae relating symmetric 
polynomials with the roots of polynomials to derive the formula
$$\int_{[0,1)^k}|f_k(\bfalp;X)|^{2s}\d\bfalp \sim s!X^s.$$
The point of view taken in \cite{Pus1991} is that by taking $k$ sufficiently large, one may 
gain some control of the value distribution of Weyl sums $f_k(\bfalp;X)$. The relative 
strength of the conclusions made available in the present paper rests on the far more 
powerful mean value estimates stemming from our recent work on Vinogradov's mean 
value theorem.\par

Our basic parameter is $X$, a sufficiently large positive number. In this paper, implicit 
constants in Vinogradov's notation $\ll$ and $\gg$ may depend on $k$, $u$ and $\eps$. 
Whenever $\eps$ appears in a statement, either implicitly or explicitly, we assert that the 
statement holds for each $\eps>0$. We use vector notation in the natural way. When 
$\grA\subset \dbR$ is Lebesgue measurable, we write $\mu(\grA)$ for its measure. 
Finally, we write $[\tet]$ for $\max\{n\in \dbZ:n\le \tet\}$.
  
The author is grateful to Professors Flaminio and Forni for discussions concerning the 
problems addressed in this paper, and in particular for providing the author with an early 
version of their paper \cite{FF2014}. These discussions benefitted from the excellent 
working conditions and support provided by the Isaac Newton Institute in Cambridge  
during the program ``Interactions between Dynamics of Group Actions and Number 
Theory'' in June 2014.

\section{Large values of Weyl sums} Our goal in this section is the proof of Theorem 
\ref{theorem1.1} and its corollaries. We begin our analysis of $f_k(\bfalp;X)$ by showing 
that the magnitude of this Weyl sum changes little when its argument is modified by a 
small quantity.

\begin{lemma}\label{lemma2.1} Let $T>0$ and $\bfalp\in \dbR^k$, and suppose that 
$|f_k(\bfalp;X)|>T$. Then whenever $\bfbet\in \dbR^k$ satisfies 
$$|\bet_j-\alp_j|\le (4\pi k)^{-1}TX^{-j-1}\quad (1\le j\le k),$$
one has $|f_k(\bfbet;X)|>\tfrac{1}{2}T$.
\end{lemma}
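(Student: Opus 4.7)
The plan is to estimate the difference $f_k(\bfbet;X)-f_k(\bfalp;X)$ term by term, using the Lipschitz property of the exponential, and then invoke the reverse triangle inequality. Explicitly, I would write
\[
f_k(\bfbet;X)-f_k(\bfalp;X)=\sum_{1\le x\le X}\left(e(\bet_1x+\ldots +\bet_kx^k)-e(\alp_1x+\ldots +\alp_kx^k)\right),
\]
and then bound the inner difference pointwise.

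For each fixed $x$ with $1\le x\le X$, I would apply the elementary inequality $|e(u)-e(v)|\le 2\pi |u-v|$, which follows from the mean value theorem applied to $t\mapsto e(t)$ together with $|e'(t)|=2\pi$. The relevant phase difference at $x$ is $\sum_{j=1}^k(\bet_j-\alp_j)x^j$, which by the hypothesis on $\bfbet-\bfalp$ and the trivial bound $x^j\le X^j$ is majorized by
\[
\sum_{j=1}^k|\bet_j-\alp_j|x^j\le \sum_{j=1}^k\frac{TX^{-j-1}}{4\pi k}\cdot X^j=\frac{T}{4\pi X}.
\]
Consequently each summand in the displayed difference has modulus at most $2\pi\cdot T/(4\pi X)=T/(2X)$.

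Summing the trivial bound over the $[X]\le X$ values of $x$ then gives $|f_k(\bfbet;X)-f_k(\bfalp;X)|\le T/2$. Combining this with the hypothesis $|f_k(\bfalp;X)|>T$ via the reverse triangle inequality yields $|f_k(\bfbet;X)|>T-T/2=T/2$, as required. There is no real obstacle here: the only point that needs a little care is choosing the numerical constant $(4\pi k)^{-1}$ precisely so that, after accounting for the factor $2\pi$ from the Lipschitz estimate, the factor $k$ from the number of coordinates, and the multiplication by $X$ coming from the summation, one lands on exactly $T/2$.
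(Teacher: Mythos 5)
Your proof is correct and is essentially the paper's argument: both bound $|f_k(\bfbet;X)-f_k(\bfalp;X)|$ linearly in the increments $|\bet_j-\alp_j|$ with the trivial bound $x^j\le X^j$ and the factor $2\pi$ from the derivative of $e(\cdot)$, landing on exactly $T/2$. The only (cosmetic) difference is that the paper applies the multidimensional mean value theorem to $\bfalp\mapsto f_k(\bfalp;X)$ and bounds the resulting derivative sums trivially, whereas you apply the Lipschitz bound $|e(u)-e(v)|\le 2\pi|u-v|$ termwise in $x$ — which is, if anything, slightly cleaner, since it avoids invoking the mean value theorem for a complex-valued function.
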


\begin{proof} Under the hypotheses of the statement of the lemma, an application of the 
multidimensional mean value theorem (see \cite[Theorem 6-17]{Apo1957}) shows that 
there exists a point $\bfgam$ on the line segment connecting $\bfalp$ and $\bfbet$ such 
that
\begin{align*}
f_k(\bfbet;X)-f_k(\bfalp;X)&=\sum_{j=1}^k(\bet_j-\alp_j)\frac{\partial}{\partial \gam_j}
f_k(\bfgam;X)\\
&=2\pi i\sum_{j=1}^k(\bet_j-\alp_j)\sum_{1\le x\le X}x^je(\gam_1x+\ldots +\gam_kx^k).
\end{align*}
Thus, by making a trivial estimate for the exponential sum defined by the inner summation 
here, we deduce that
\begin{align*}
|f_k(\bfbet;X)|&\ge |f_k(\bfalp;X)|-2\pi \sum_{j=1}^k|\bet_j-\alp_j|X^{j+1}\\
&>T-(2k)^{-1}\sum_{j=1}^kT=\tfrac{1}{2}T.
\end{align*}
This completes the proof of the lemma.
\end{proof}

We suppose now that $i_l$ $(1\le l\le t)$ are suffices with $1\le i_1<\ldots <i_t\le k$, 
and we recall the notation introduced in the preamble to the statement of Theorem 
\ref{theorem1.1} above. It is convenient to write
$$\sig(\bfi)=i_1+i_2+\ldots +i_t.$$
Our initial objective is to obtain an estimate for the set
\begin{equation}\label{2.1}
\grB_T(X)=\{ \bfalp^\dagger\in [0,1)^{k-t}:\text{$|f_k(\bfalp;X)|>T$ for some 
$\bfalp^*\in [0,1)^t$}\}.
\end{equation}

\begin{lemma}\label{lemma2.2} Suppose that $1\le u\le \tfrac{1}{2}k(k+1)$, and assume 
the hypothesis ${\rm MC}_k(u)$. Then whenever $T$ is a real number with $0<T\le X$, 
one has
$$\mu(\grB_T(X))\ll X^{u+t+\sig(\bfi)+\eps}T^{-2u-t}.$$
\end{lemma}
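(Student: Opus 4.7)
The plan is to use Lemma \ref{lemma2.1} to promote a single large value of $|f_k(\bfalp;X)|$ to a whole box of large values, then integrate to convert the measure bound into a moment inequality controlled by ${\rm MC}_k(u)$.

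First I would fix $\bfalp^\dagger \in \grB_T(X)$, so by definition there exists some $\bfalp^* = \bfalp^*(\bfalp^\dagger) \in [0,1)^t$ with $|f_k(\bfalp;X)| > T$. Applying Lemma \ref{lemma2.1}, for every $\bfbet^* \in \dbR^t$ satisfying $|\bet_{i_l} - \alp_{i_l}| \le (4\pi k)^{-1} T X^{-i_l-1}$ for $1 \le l \le t$ (and $\bfbet^\dagger = \bfalp^\dagger$), one has $|f_k(\bfbet;X)| > T/2$. Since $f_k$ is $1$-periodic in each $\alp_j$, and since $T \le X \le X^{i_l+1}$ ensures the box does not overflow the unit torus, on reducing modulo $1$ we obtain a set of measure $\gg T^t \prod_{l=1}^t X^{-i_l-1} = T^t X^{-t-\sig(\bfi)}$ inside $[0,1)^t$ on which $|f_k(\bfalp^\dagger,\bfbet^*;X)| > T/2$. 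Consequently,
$$\int_{[0,1)^t} |f_k(\bfalp^\dagger,\bfbet^*;X)|^{2u} \d\bfbet^* \gg T^{2u+t} X^{-t-\sig(\bfi)}.$$

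Next I would integrate this inequality over $\bfalp^\dagger \in \grB_T(X)$. The left-hand side is then bounded above by the full $2u$-th moment:
$$\mu(\grB_T(X)) \cdot T^{2u+t} X^{-t-\sig(\bfi)} \ll \int_{[0,1)^k} |f_k(\bfalp;X)|^{2u} \d\bfalp.$$
Under ${\rm MC}_k(u)$ with $1 \le u \le \tfrac{1}{2}k(k+1)$, the right-hand side is $\ll X^{u+\eps}$ (the term $X^u$ dominates $X^{2u - \tfrac{1}{2}k(k+1)}$ in this range). Rearranging yields
$$\mu(\grB_T(X)) \ll X^{u+t+\sig(\bfi)+\eps} T^{-2u-t},$$
as required.

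The only delicate point is the measurability of the assignment $\bfalp^\dagger \mapsto \bfalp^*(\bfalp^\dagger)$ and the application of Fubini; this can be sidestepped entirely by working with the function $\bfalp \mapsto |f_k(\bfalp;X)|$, noting that $\grB_T(X)$ is the projection onto the $\bfalp^\dagger$-coordinates of the open set $\{\bfalp \in [0,1)^k : |f_k(\bfalp;X)| > T\}$, and running the box-thickening argument pointwise on the latter. The hypothesis $T \le X$ is used only to guarantee that the local boxes fit inside $[0,1)^t$; no arithmetic obstruction arises, so the main content really is the conversion of one supremum bound into a moment via Lemma \ref{lemma2.1}.
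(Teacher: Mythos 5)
Your proof is correct and follows essentially the same route as the paper: use Lemma \ref{lemma2.1} to thicken a single large value into a box of $\bfalp^*$'s of measure $\gg T^t X^{-t-\sig(\bfi)}$ on which $|f_k|>\tfrac{1}{2}T$, then integrate and invoke ${\rm MC}_k(u)$. The only organizational difference is that the paper partitions $[0,1)^t$ into a fixed grid of such boxes and sums over the cells (which disposes of the measurable-selection issue automatically), whereas you thicken pointwise around each witness $\bfalp^*$ and correctly note that measurability can be sidestepped.
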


\begin{proof} For $1\le l\le t$, put
$$\del_l=(4\pi k)^{-1}TX^{-i_l-1}\quad \text{and}\quad M_l=[\del_l^{-1}].$$
When $0\le m_l\le M_l$ $(1\le l\le t)$, we define the hypercuboids
$$\calI(\bfm)=[m_1\del_1,(m_1+1)\del_1]\times \ldots \times [m_t\del_t,(m_t+1)\del_t]$$
and
$$\calM=[0,M_1]\times \ldots \times [0,M_t].$$
Finally, for each $\bfm\in \calM$, we put
$$\grB_T(\bfm;X)=\{ \bfalp^\dagger\in [0,1)^{k-t}:\text{$|f_k(\bfalp;X)|>T$ for some 
$\bfalp^*\in \calI(\bfm)$}\}.$$
Since $[0,1)^t$ is contained in the union of the sets $\calI(\bfm)$ for $\bfm\in \calM$, we 
see that
\begin{equation}\label{2.2}
\grB_T(X)=\bigcup_{\bfm\in \calM}\grB_T(\bfm;X).
\end{equation}

\par Observe next that when $\bfalp^*$ and $\bfbet^*$ both lie in $\calI(\bfm)$ for 
some $\bfm\in \calM$, then
$$|\alp_{i_l}-\bet_{i_l}|\le \del_l=(4\pi k)^{-1}TX^{-i_l-1}\quad (1\le l\le t).$$
Thus we deduce from Lemma \ref{lemma2.1} that whenever 
$\bfalp^\dagger\in \grB_T(\bfm;X)$ for some $\bfm\in \calM$, then 
$|f_k(\bfalp;X)|>\tfrac{1}{2}T$ for all $\bfalp^*\in \calI(\bfm)$. It follows that
$$\left(\tfrac{1}{2}T\right)^{2u}\mu(\grB_T(\bfm;X))
\mu(\calI(\bfm))<\int_{\calI(\bfm)}\int_{\grB_T(\bfm;X)}
|f_k(\bfalp;X)|^{2u}\d\bfalp^\dagger \d\bfalp^*.$$
But $\mu(\calI(\bfm))=\del_1\cdots \del_t\gg (T/X)^tX^{-\sig(\bfi)}$, and thus
$$T^{2u+t}X^{-t-\sig(\bfi)}\mu(\grB_T(\bfm;X))\ll \int_{\calI(\bfm)}
\int_{[0,1)^{k-t}}|f_k(\bfalp;X)|^{2u}\d\bfalp^\dagger \d\bfalp^*.$$
Consequently, on recalling (\ref{2.2}), one arrives at the upper bound
\begin{align*}
\mu(\grB_T(X))&\le \sum_{\bfm\in \calM}\mu(\grB_T(\bfm;X))\\
&\ll T^{-2u-t}X^{t+\sig(\bfi)}\sum_{\bfm\in \calM}\int_{\calI(\bfm)}\int_{[0,1)^{k-t}}
|f_k(\bfalp;X)|^{2u}\d\bfalp^\dagger \d\bfalp^*.
\end{align*}
Since the union of the sets $\calI(\bfm)$ with $\bfm\in \calM$ is contained in $[0,2)^t$, we 
reach the point at which we may utilise ${\rm MC}_k(u)$, obtaining the estimate
\begin{align*}
\mu(\grB_T(X))&\ll T^{-2u-t}X^{t+\sig(\bfi)}\int_{[0,2)^k}|f_k(\bfalp;X)|^{2u}
\d\bfalp \\
&\ll T^{-2u-t}X^{t+\sig(\bfi)}\cdot 2^kX^{u+\eps}.
\end{align*}
The conclusion of the lemma is now immediate.
\end{proof}

We next make a choice for $T$. Let $\tau$ be a positive number, and put
$$T(X)=X^{1/2+\del(\bfi)+\tau},$$
where $\del(\bfi)$ is defined as in (\ref{1.4}). Here we note that
\begin{equation}\label{2.3}
\tfrac{1}{2}+\del(\bfi)=\frac{(2u+t+1)+(t+1+2\sig(\bfi))}{4u+2t+2}
=\frac{u+t+1+\sig(\bfi)}{2u+t+1}.
\end{equation}
Finally, let $(X_n)_{n=1}^\infty$ be any sequence of natural numbers with the property 
that for large enough values of $n$, one has
\begin{equation}\label{2.4}
T(X_n)\le X_{n+1}-X_n\le 2T(X_n),
\end{equation}
and in the interests of concision, write $T_n=T(X_n)$.

\begin{lemma}\label{lemma2.3} Suppose that $1\le u\le \tfrac{1}{2}k(k+1)$ and assume 
${\rm MC}_k(u)$. Then for any sequence $(X_n)_{n=1}^\infty$ satisfying 
{\rm (\ref{2.4})}, one has
$$\mu\left( \bigcup_{n=1}^\infty \grB_{T_n}(X_n)\right) <\infty .$$
\end{lemma}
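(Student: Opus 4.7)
The plan is to bound each $\mu(\grB_{T_n}(X_n))$ using Lemma \ref{lemma2.2}, then exploit the sparsity of $(X_n)$ imposed by (\ref{2.4}) to convert the series into something that behaves like a convergent integral.

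First I would apply Lemma \ref{lemma2.2} directly with $T=T_n$ and $X=X_n$ to obtain
$$\mu(\grB_{T_n}(X_n))\ll X_n^{u+t+\sig(\bfi)+\eps}T_n^{-2u-t}.$$
Substituting $T_n=X_n^{1/2+\del(\bfi)+\tau}$, the exponent of $X_n$ becomes
$$u+t+\sig(\bfi)+\eps-(2u+t)\!\left(\tfrac{1}{2}+\del(\bfi)\right)-(2u+t)\tau.$$
The identity (\ref{2.3}) gives $(2u+t+1)(\tfrac{1}{2}+\del(\bfi))=u+t+1+\sig(\bfi)$, hence $(2u+t)(\tfrac{1}{2}+\del(\bfi))=u+t+1+\sig(\bfi)-\tfrac{1}{2}-\del(\bfi)$. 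Plugging this in and simplifying, all the heavy terms telescope to leave
$$\mu(\grB_{T_n}(X_n))\ll X_n^{-1/2+\del(\bfi)+\eps-(2u+t)\tau}.$$

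Next I would use the sparsity condition $X_{n+1}-X_n\ge T_n=X_n^{1/2+\del(\bfi)+\tau}$ to estimate the sum by a Riemann-sum comparison. Writing
$$\mu(\grB_{T_n}(X_n))\ll (X_{n+1}-X_n)\cdot X_n^{-1+\eps-(2u+t+1)\tau},$$
the right-hand side is a lower Riemann sum for the integral of $x^{-1+\eps-(2u+t+1)\tau}$ against $\d x$ over $[X_1,\infty)$, since the integrand is eventually decreasing. Hence, provided $\eps$ is chosen small enough in terms of $\tau$ that the exponent satisfies $-1+\eps-(2u+t+1)\tau<-1$, i.e.\ $\eps<(2u+t+1)\tau$, the sum $\sum_n\mu(\grB_{T_n}(X_n))$ is dominated by a convergent integral.

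The main potential obstacle is verifying that the exponent of $X_n$ after substitution is in fact small enough; this is where the identity (\ref{2.3}) does the work, absorbing the $u+t+\sig(\bfi)$ coming from Lemma \ref{lemma2.2} and leaving only a $-1/2+\del(\bfi)$ main term plus a controllable error. Once this algebraic cancellation is observed, convergence is essentially automatic: the gap $X_{n+1}-X_n$ grows like $X_n^{1/2+\del(\bfi)+\tau}$, which is exactly the right size relative to the bound on $\mu(\grB_{T_n}(X_n))$ to render the series summable after the $\tau$-slack is spent. Choosing $\eps$ suitably small compared to $\tau$ in the final inequality then yields the stated conclusion.
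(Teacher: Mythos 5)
Your proof is correct and follows essentially the same route as the paper: Lemma \ref{lemma2.2} combined with the identity (\ref{2.3}) reduces the summand to $X_n^{-1/2+\del(\bfi)+\eps-(2u+t)\tau}\ll (X_{n+1}-X_n)X_n^{-1+\eps-(2u+t+1)\tau}$, and the spacing condition (\ref{2.4}) then forces convergence --- the paper does this by grouping indices into dyadic blocks on which $X_n$ doubles and summing a geometric series, whereas you compare with a convergent integral, which is the same estimate in different clothing. One minor slip: with left endpoints and a decreasing integrand your sum is an \emph{upper} Riemann sum rather than a lower one, but since $X_{n+1}\le X_n+2T_n\le 3X_n$ the values at consecutive points differ by a bounded factor, so the comparison with $\int_{X_1}^{\infty}x^{-1+\eps-(2u+t+1)\tau}\d x$ still goes through once $\eps<(2u+t+1)\tau$.
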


\begin{proof} On noting the relation (\ref{2.3}), we find from Lemma \ref{lemma2.2} that
$$\sum_{n=1}^\infty \mu(\grB_{T_n}(X_n))\ll \sum_{n=1}^\infty 
X_n^{u+t+\sig(\bfi)+\eps}T_n^{-2u-t}\le \sum_{n=1}^\infty (T_n/X_n)X_n^{\eps-2\tau}.
$$
In view of the condition (\ref{2.4}), it follows that whenever $m\ge X_nT_n^{-1}$, then 
$X_{n+m}-X_n\ge (X_n/T_n)T_n$, whence $X_{n+m}\ge 2X_n$. Consequently,
$$\mu\left( \bigcup_{n=1}^\infty \grB_{T_n}(X_n)\right) \ll \sum_{j=0}^\infty 
(2^j)^{\eps-2\tau}<\infty .$$
This completes the proof of the lemma.
\end{proof}

We are now equipped to complete the proof of Theorem \ref{theorem1.1}. Denote by 
$A_n(\bfalp^\dagger)$ the condition that $|f_k(\bfalp;X_n)|>T_n$ for some 
$\bfalp^*\in [0,1)^t$. Then the definition (\ref{2.1}) of $\grB_T(X)$ implies that
$$\grB_{T_n}(X_n)=\{ \bfalp^\dagger \in [0,1)^{k-t}:A_n(\bfalp^\dagger)\}.$$
Put
$$\grB^*=\{ \bfalp^\dagger\in [0,1)^{k-t}:\text{$A_n(\bfalp^\dagger)$ holds for infinitely 
many $n\in \dbN$}\}.$$
Then it follows from Lemma \ref{lemma2.3} via the Borel-Cantelli lemma that 
$\mu(\grB^*)=0$. Consequently, there is a set $\grB_0=[0,1)^{k-t}\setminus \grB^*$ of 
full measure having the property that, whenever $\bfalp^\dagger\in \grB_0$, then 
$A_n(\bfalp^\dagger)$ holds for at most finitely many $n\in \dbN$. The latter assertion 
implies that $|f_k(\bfalp;X_n)|\le T_n$ for all $\bfalp^*\in [0,1)^t$, with the exception of 
at most finitely many $n\in \dbN$.\par

Suppose that $X>0$, and put $X^*=[X]$, so that $f_k(\bfalp;X)=f_k(\bfalp;X^*)$. In view 
of the condition (\ref{2.4}), when $X$ is sufficiently large there exists $n\in \dbN$ for 
which $X_n\le X^*\le X_n+2T_n$. But then, on making a trivial estimate for the 
exponential function, we have
$$|f_k(\bfalp;X)-f_k(\bfalp;X_n)|\le X-X_n\le 2T_n.$$
Whenever $|f_k(\bfalp;X_n)|\le T_n$, therefore, one finds that
$$|f_k(\bfalp;X)|\le 3T_n=3X_n^{1/2+\del(\bfi)+\tau}\le 3X^{1/2+\del(\bfi)+\tau}.$$
Then we may conclude that whenever $\bfalp^\dagger \in \grB_0$, then for all positive 
numbers $X$, one has $|f_k(\bfalp;X)|\le 3X^{1/2+\del(\bfi)+\tau}$ for all 
$\bfalp^*\in [0,1)^t$, with the exception of at most those numbers $X$ lying in a 
bounded interval $(0,X_0]$. Since $\tau>0$ may be taken arbitrarily small, the conclusion 
of Theorem \ref{theorem1.1} follows.\vskip.1cm

The corollaries to Theorem \ref{theorem1.1} are easily confirmed. On the one hand, when 
$k\ge 4$, we find from \cite[Theorem 1.1]{FW2014} that ${\rm MC}_k(u)$ holds for 
$u=\left[ \tfrac{1}{4}(k+1)^2\right]$. On the other hand, from 
\cite[Theorem 1.3]{Woo2014b}, one obtains ${\rm MC}_k(u)$ when $k$ is large and 
$u=[\tfrac{1}{2}k(k+1)-\tfrac{1}{3}k-8k^{2/3}]$. In order to establish Corollary 
\ref{corollary1.2}, we apply Theorem \ref{theorem1.1} with $\bfi=(1,k)$. In such 
circumstances, we have $t=2$ and
$$\del(\bfi)=\frac{3+2(k+1)}{4u+6}.$$
Thus, when $k\ge 4$, one may take
$$\del(\bfi)=\frac{2k+5}{(k^2+2k)+6}<\frac{2}{k-1/2},$$
whilst for large $k$, we may instead take
$$\del(\bfi)=\frac{2k+5}{2k(k+1)-\tfrac{4}{3}k+o(k)}=\frac{1}{k-\tfrac{13}{6}+o(1)}=
\frac{1}{k}+o(1).$$
In both situations, we conclude from Theorem \ref{theorem1.1} that there exists a set 
$\grB\subseteq [0,1)^{k-2}$ of full measure such that, when 
$(\alp_2,\alp_3,\ldots ,\alp_{k-1})\in \grB$, then for all real numbers $X$ sufficiently large 
in terms of $\eps$, $k$ and $\alp_2,\ldots ,\alp_{k-1}$, one has
$$\sup_{(\alp_1,\alp_k)\in [0,1)^2}|f_k(\bfalp;X)|\le X^{1/2+\del(\bfi)+\eps}.$$
This confirms both of the conclusions of Corollary \ref{corollary1.2}.\par

We turn next to Corollary \ref{corollary1.3}. Taking $\bfi=(i_1,\ldots ,i_t)$ and 
$u=\left[ \tfrac{1}{4}(k+1)^2\right]$, we find that the conclusion of Theorem 
\ref{theorem1.1} holds with
$$\del(\bfi)=\frac{t+1+2(i_1+\ldots +i_t)}{4[\tfrac{1}{4}(k+1)^2]+2t+2}
<\frac{k^2/\log k-t-1}{k^2+2k+2t+2}<\frac{1}{\log k}.$$
Consequently, there exists a set $\grB\subseteq [0,1)^{k-t}$ of full measure such that, 
when $\bfalp^\dagger\in \grB$, then for all real numbers $X$ sufficiently large in 
terms of $k$ and $\bfalp^\dagger$, one has 
${\displaystyle{\sup_{\bfalp^*\in [0,1)^t}|f_k(\bfalp;X)|\le X^{1/2+1/\log k}}}$. This 
confirms Corollary \ref{corollary1.3}. 

\section{Equidistribution of polynomials modulo one} We investigate the equidistribution of 
polynomial sequences by applying the Erd\H os-Tur\'an inequality (see 
\cite{ET1948a, ET1948b}). This entails estimating the exponential sum $f_k(h\bfalp;X)$ for 
$1\le h\le H$, with $H$ as large as is feasible. Suppose once more that $i_l$ $(1\le l\le t)$ 
are suffices with $1\le i_1<i_2<\ldots <i_t\le k$, with the conventions in the preamble to 
the statement of Theorem \ref{theorem1.1}. When $h\in \dbN$, we define a set 
generalising that defined in (\ref{2.1}) by putting
$$\grB_T^{(h)}(X)=\{\bfalp^\dagger \in [0,1)^{k-t}:\text{$|f_k(h\bfalp;X)|>T$ for some 
$\bfalp^*\in [0,1)^t$}\}.$$
Thus we have
\begin{equation}\label{3.1}
\grB_T^{(h)}(X)=\{ \bfalp^\dagger \in [0,1)^{k-t}:h\bfalp^\dagger \in \grB_T(X)
\mmod{1}\}.
\end{equation}
When $\lam,\mu\in \dbR$ and $\calA\subseteq \dbR$, denote by $\lam (\calA+\mu)$ the 
set $\{ \lam (\tet+\mu ):\tet\in \calA\}$. Then it follows from (\ref{3.1}) that
$$\grB_T^{(h)}(X)=\bigcup_{m=0}^{h-1}h^{-1}(\grB_T(X)+m),$$
and hence $\mu(\grB_T^{(h)}(X))=\mu(\grB_T(X))$ for $h\in \dbN$.\par

We next introduce the set $\grC_T(X,H)$ consisting of those points 
$\bfalp^\dagger \in [0,1)^{k-t}$ for which one has $|f_k(h\bfalp;X)|>T$ for some 
$\bfalp^*\in [0,1)^t$ and $h\in \dbN$ with $1\le h\le H$. Then we have
$$\grC_T(X,H)=\bigcup_{1\le h\le H}\grB_T^{(h)}(X),$$
so that
$$\mu(\grC_T(X,H))\le \sum_{1\le h\le H}\mu(\grB_T^{(h)}(X))
\le H\mu(\grB_T(X)).$$
We therefore deduce from Lemma \ref{lemma2.2} that when 
$1\le u\le \tfrac{1}{2}k(k+1)$ and ${\rm MC}_k(u)$ holds, then one has
\begin{equation}\label{3.2}
\mu(\grC_T(X,H))\ll HX^{u+t+\sig(\bfi)+\eps}T^{-2u-t}.
\end{equation}

\par We now make a choice for $T$ and $H$. Let $\tau$ be a positive number, and put
$$H(X)=X^{1/2-\nu(\bfi)-2\tau}\quad \text{and}\quad T(X)=X^{1/2+\nu(\bfi)+\tau},$$
where $\nu(\bfi)$ is defined as in (\ref{1.6}). Note that
\begin{equation}\label{3.3}
\tfrac{1}{2}+\nu(\bfi)=\frac{(2u+t+2)+(t+2+2\sig(\bfi))}{4u+2t+4}=
\frac{u+t+2+\sig(\bfi)}{2u+t+2}.
\end{equation}
We again consider a sequence of natural numbers $(X_n)_{n=1}^\infty$ satisfying the 
condition (\ref{2.4}), and then write $T_n=T(X_n)$ and $H_n=H(X_n)$.

\begin{lemma}\label{lemma3.1} Suppose that $1\le u\le \tfrac{1}{2}k(k+1)$ and assume 
${\rm MC}_k(u)$. Then for any sequence $(X_n)_{n=1}^\infty$ satisfying 
{\rm (\ref{2.4})}, one has
$$\mu\left( \bigcup_{n=1}^\infty \grC_{T_n}(X_n,H_n)\right) <\infty.$$
\end{lemma}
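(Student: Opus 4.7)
The plan is to mirror the proof of Lemma~\ref{lemma2.3} almost verbatim, replacing the bound of Lemma~\ref{lemma2.2} by the inequality (\ref{3.2}) for $\mu(\grC_T(X,H))$, which differs only by the additional factor $H$.

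I would first substitute $T=T_n$, $H=H_n$ and $X=X_n$ into (\ref{3.2}) and collect powers of $X_n$. The exponent of $X_n$ in the resulting per-term estimate is
$$E = \bigl(\tfrac{1}{2}-\nu(\bfi)-2\tau\bigr) + \bigl(u+t+\sig(\bfi)+\eps\bigr) - (2u+t)\bigl(\tfrac{1}{2}+\nu(\bfi)+\tau\bigr).$$
Next I would simplify $E$ using the identity (\ref{3.3}), which I would rewrite in the cleaner form
$$(2u+t+2)\nu(\bfi) = 1+\tfrac{t}{2}+\sig(\bfi).$$
A routine rearrangement collapses $E$ to
$$E = -\tfrac{1}{2}+\nu(\bfi)+\eps-(2u+t+2)\tau,$$
mirroring precisely the shape of the exponent encountered in the proof of Lemma~\ref{lemma2.3}.

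At this point I would apply the dyadic summation used in Lemma~\ref{lemma2.3}. Since $T(\cdot)$ is increasing, the spacing hypothesis (\ref{2.4}) gives $X_{n+m}\ge 2X_n$ whenever $m\ge X_n T_n^{-1}$, so the number of indices $n$ with $X_n$ in any dyadic range $[2^j,2^{j+1})$ is $\ll X_n T_n^{-1}=X_n^{1/2-\nu(\bfi)-\tau}$. Grouping the series by dyadic blocks yields
$$\sum_{n=1}^\infty \mu\bigl(\grC_{T_n}(X_n,H_n)\bigr) \ll \sum_{j=0}^\infty 2^{j(\eps-(2u+t+3)\tau)},$$
which is finite once $\eps$ is chosen smaller than $(2u+t+3)\tau$.

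There is no real analytic obstacle here, since every ingredient (the mean value hypothesis ${\rm MC}_k(u)$, the covering of $[0,1)^t$ by small cuboids producing (\ref{3.2}), and the dyadic thinning of the sequence $X_n$) has already been set up in Section~2. The only point that requires genuine care is the calibration of $H_n$: the exponent $\tfrac{1}{2}-\nu(\bfi)-2\tau$ has been chosen precisely so that, after combining (\ref{3.2}) with the identity (\ref{3.3}), the per-term bound retains the decay $X_n^{-1/2+\nu(\bfi)+O(\tau)}$ needed to beat the $X_n T_n^{-1}$ density of the sequence. This is exactly analogous to the role played by the exponent $\tfrac{1}{2}+\delta(\bfi)+\tau$ of $T_n$ in Lemma~\ref{lemma2.3}, and no further new ideas appear to be required.
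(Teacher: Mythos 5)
Your proposal is correct and follows essentially the same route as the paper: substitute $H_n$, $T_n$ into the bound (\ref{3.2}), simplify the exponent via the identity (\ref{3.3}), and then use the spacing condition (\ref{2.4}) to group the indices into dyadic blocks of size $\ll X_nT_n^{-1}$, exactly as in Lemma \ref{lemma2.3}. Your exponent computation $E=-\tfrac12+\nu(\bfi)+\eps-(2u+t+2)\tau$ and the resulting block contribution $2^{j(\eps-(2u+t+3)\tau)}$ check out (the paper is slightly more lossy, using $T_nH_n\le X_n$ to reduce to the per-term bound $(T_n/X_n)X_n^{\eps-2\tau}$, but this is immaterial).
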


\begin{proof} In view of the relation (\ref{3.3}), it follows from (\ref{3.2}) that
\begin{align*}
\sum_{n=1}^\infty \mu (\grC_{T_n}(X_n,H_n))\ll \sum_{n=1}^\infty 
H_nX_n^{u+t+\sig(\bfi)+\eps}T_n^{-2u-t}\le \sum_{n=1}^\infty 
(H_nT_n^2/X_n^2)X_n^{\eps-2\tau}.
\end{align*}
The condition (\ref{2.4}) ensures that whenever $m\ge X_nT_n^{-1}$, then 
$X_{n+m}\ge 2X_n$. Thus, since $T_nH_n\le X_n$, one obtains
$$\mu\left( \bigcup_{n=1}^\infty \grC_{T_n}(X_n,H_n)\right) \ll \sum_{n=1}^\infty 
(T_n/X_n)X_n^{\eps-2\tau}\ll \sum_{j=0}^\infty (2^j)^{\eps-2\tau}<\infty.$$
This completes the proof of the lemma.
\end{proof}

Denote by $B_n(\bfalp^\dagger)$ the condition that $|f_k(h\bfalp;X_n)|>T_n$ for some 
$\bfalp^*\in [0,1)^t$ and $h\in \dbN$ with $1\le h\le H_n$. Then the definition of 
$\grC_T(X,H)$ implies that
$$\grC_{T_n}(X_n,H_n)=\{ \bfalp^\dagger\in [0,1)^{k-t}: B_n(\bfalp^\dagger)\}.$$
Put
$$\grC^*=\{\bfalp^\dagger\in [0,1)^{k-t}:\text{$B_n(\bfalp^\dagger)$ holds for infinitely 
many $n\in \dbN$}\}.$$
Then it follows from Lemma \ref{lemma3.1} via the Borel-Cantelli lemma that 
$\mu(\grC^*)=0$. Consequently, there is a set $\grC_0=[0,1)^{k-t}\setminus \grC^*$ of 
full measure having the property that, whenever $\bfalp^\dagger\in \grC_0$, then 
$B_n(\bfalp^\dagger)$ holds for at most finitely many $n\in \dbN$. The latter implies that 
$|f_k(h\bfalp;X_n)|\le T_n$ for all $\bfalp^*\in [0,1)^t$ and all $h\in \dbN$ with 
$1\le h\le H_n$, with the exception of at most finitely many $n\in \dbN$.

\par As in the corresponding treatment of \S2, the condition (\ref{2.4}) ensures that 
when $\bfalp^\dagger\in \grC_0$, then for all $X>0$ and $h\in \dbN$ with 
$h\le X^{1/2-\nu(\bfi)-2\tau}$, one has 
\begin{equation}\label{3.4}
\sup_{\bfalp^*\in [0,1)^t}|f_k(h\bfalp;X)|\le 3X^{1/2+\nu(\bfi)+\tau},
\end{equation}
except perhaps for certain numbers $X$ lying in a bounded interval $[0,X_0)$.\par

The estimate (\ref{3.4}) provides our basic input for an application of the Erd\H os-Tur\'an 
inequality, as decribed in \cite[Theorem 2.1]{Bak1986}. Suppose that $0\le a<b\le 1$. Also, write 
$x_n=\alp_kn^k+\ldots +\alp_1n$ and put $H=X^{1/2-\nu(\bfi)-2\tau}$. Then
\begin{align*}
\biggl| \sum_{\substack{1\le n\le X\\ x_n\in [a,b]\mmod{1}}}1-X(b-a)\biggr| &\le 
\frac{X}{H+1}+3\sum_{1\le h\le H}h^{-1}\biggl|\sum_{1\le n\le X}e(hx_n)\biggr| \\
&=\frac{X}{H+1}+3\sum_{1\le h\le H}h^{-1}|f_k(h\bfalp;X)|.
\end{align*}
Consequently, when $\bfalp^\dagger\in \grC_0$, one finds from (\ref{3.4}) that
$$|Z_{a,b}(\bfalp;X)-X(b-a)|\le XH^{-1}+
9\sum_{1\le h\le H}h^{-1}X^{1/2+\nu(\bfi)+\tau}\ll X^{1/2+\nu(\bfi)+2\tau}.$$
Since $\tau>0$ may be taken arbitarily small, Theorem \ref{theorem1.4} now follows.\vskip.1cm

The proof of Corollary \ref{corollary1.5} follows on taking $\bfi=(k)$ and $u=\left[ 
\tfrac{1}{4}(k+1)^2\right]$, so that the conclusion of Theorem \ref{theorem1.4} holds 
with
$$\nu(\bfi)=\frac{3+2k}{4u+6}\le \frac{2k+3}{k^2+2k+6}<\frac{2}{k}.$$
Then we conclude that there exists a set $\grB^*\subseteq [0,1)^{k-1}$ of full measure 
with the property that, whenever $(\alp_1,\ldots ,\alp_{k-1})\in \grB^*$, then for all 
$N\in \dbN$ sufficiently large in terms of $k$ and $\alp_1,\ldots ,\alp_{k-1}$, one has
$$|Z_{a,b}(\bfalp;N)-(b-a)N|\le N^{1/2+2/k}\quad (0\le a<b\le 1).$$
This completes the proof of Corollary \ref{corollary1.5}.

\bibliographystyle{amsbracket}
\providecommand{\bysame}{\leavevmode\hbox to3em{\hrulefill}\thinspace}

\end{document}